\newcommand{\G}{\mathcal{G}}
\newtheorem{theorem}{Theorem}[section]
\newtheorem{lem}[theorem]{Lemma}
\theoremstyle{definition}
\title{Balanced spanning trees of the $2$-by-$N$ grid}
\author{Makenzie Gallagher}
\address{Computer Science Major, Saint Joseph's University, Philadelphia, PA,19026}
\email{cg10794149@sju.edu} 
\author{Kristopher Tapp}
\address{Department of Mathematics, Saint Joseph's University, Philadelphia, PA,19026}
\email{ktapp@sju.edu}
\subjclass[2020]{primary 05C05; secondary 05C90}
\begin{document}

\begin{abstract}
We obtain an exact formula for the probability that a uniformly random spanning tree of the $2$-by-$n$ square grid is ``balanced'' in the sense that it has an edge whose removal partitions its vertices into two sets of equal size.  We compute the exact limit of this probability as $n\rightarrow\infty$.
\end{abstract}
\maketitle

\section{Introduction}
Recent advances in computational redistricting have sparked interest in \emph{balanced} spanning trees.  Several algorithms have been used in court cases this decade to argue that voting maps are biased, including ReCom (\cite{DeFord_Duchin_Solomon_2020, Cannon_Duchin_Randall_Rule_2022}),  Forest ReCom (\cite{Autry_Carter_Herschlag_Hunter_Mattingly_2023}) and SMC (\cite{McCartan_Imai_2023}).  Each of these algorithms is capable of generating an ensemble of thousands or millions of random nonpartisan maps by repeatedly performing a step that involves drawing a random spanning tree of a graph and identifying a \emph{balanced cut edge} (an edge whose removal partitions the vertices into two sets of approximately equal weight), if one exists.  A better theoretical understanding of this likelihood could help to understand the performance of these algorithms and improve upon them.

A significant step was recently taken in this direction.  The authors of~\cite{Charikar_Liu_Liu_Vuong_2023} proposed a provably fast-mixing algorithm for sampling not-necessarily-balanced partitions of the vertices of a graph into connected pieces.  In the case of grids, they conjectured that a sufficiently large portion of such a sample would be balanced.  In the redistricting context, this amounts to building an ensemble of voting maps whose districts are not even approximately equipopulous, and hoping that enough are balanced to yield a sufficiently large subsample of legal maps. Their conjecture was proven by the authors of~\cite{Cannon_Pegden_Tucker-Foltz_2024}, who established that for fixed $k$, the portion of spanning trees of the $n$-by-$m$ square grid that are $k$-\emph{balanced} is at least $1/\text{poly}(m,n)$, where ``$k$-balanced'' means there exist $k-1$ edges whose removal partitions the vertices into $k$ sets of equal sizes.  Very recent work has expanded these results and proof techniques~\cite{Cannon_Pegden_Tucker-Foltz_2025},~\cite{Gillman_Platnick_Randall_2025},~\cite{Chen_Munagala_Sankar_2025}.

The goal of this paper is to obtain exact results in the simplest nontrivial case: the $2$-by-$n$ grid, which we denote as $\G_n$.  Let $T_n$ denote the number of spanning trees of $\G_n$.  Let $S_n$ denote the number of \emph{balanced} spanning trees; i.e., trees that contain an edge (called a \emph{balanced cut edge}) whose removal partitions the vertices into two sets of equal size.  Our main result is an exact formula for the ratio $\frac{S_n}{T_n}$, from which the following limits follow.
\begin{theorem}\label{T:main}
\hspace{1in}
\begin{enumerate}
\item \textbf{(Odd case)} 
$\lim_{m\rightarrow\infty} \frac{S_{2m+1}}{T_{2m+1}}= \frac{3+\sqrt{3}}{9}\approx 0.52578$
\item \textbf{(Even case)}
$\lim_{m\rightarrow\infty} \frac{S_{2m}}{T_{2m}}= \frac{1+4\sqrt{3}}{6\sqrt{3}}\approx 0.76289$.
\end{enumerate}
\end{theorem}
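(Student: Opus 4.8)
The plan is to reduce both $S_n$ and $T_n$ to explicit closed forms and then compute the ratio's limit directly. First I would record the spanning-tree count. Labelling the grid vertices $a_1,\dots,a_n$ (top row) and $b_1,\dots,b_n$ (bottom row), a one-column transfer matrix (equivalently, deletion--contraction on the last rung) yields the recurrence $T_n = 4T_{n-1}-T_{n-2}$ with $T_1=1$, $T_2=4$, whose characteristic roots are $\alpha = 2+\sqrt3$ and $\beta = 2-\sqrt3$ (note $\alpha\beta=1$). Hence
\[
  T_n = \frac{\alpha^n-\beta^n}{2\sqrt3},\qquad T_n^2 = \frac{q^n + q^{-n} - 2}{12},\quad q:=\alpha^2 = 7+4\sqrt3 ,
\]
using $\beta^2 = q^{-1}$. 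This is the growth rate against which the numerator is compared.

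The crucial structural step is a uniqueness lemma: \emph{every spanning tree of $\G_n$ has at most one balanced cut edge.} Indeed, if a tree $T$ had two such edges $e,e'$, then $T-\{e,e'\}$ has three components $P_1,P_2,P_3$ with $P_2$ adjacent (in $T$) to both $e$ and $e'$; balance of $e$ gives $|P_1|=n$ and balance of $e'$ gives $|P_3|=n$, so since the three sizes total $2n$ we get $|P_2|=0$, a contradiction. Consequently the balanced trees are partitioned according to the connected bipartition $(V_1,V_2)$ induced by their unique balanced cut edge, and for a fixed balanced bipartition the number of spanning trees realizing it is exactly $\tau(V_1)\,\tau(V_2)\,c(V_1,V_2)$, where $\tau(\cdot)$ denotes the number of spanning trees of the induced subgraph and $c$ the number of grid edges crossing the cut (a tree realizing the bipartition is a spanning tree of each side together with exactly one crossing edge). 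Summing,
\[
  S_n = \sum_{(V_1,V_2)} \tau(V_1)\,\tau(V_2)\,c(V_1,V_2),
\]
the sum running over all balanced bipartitions of $\G_n$ into two connected halves.

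To make this sum explicit I would classify the balanced connected bipartitions by planar duality: their edge cuts are exactly the bonds of $\G_n$, which correspond bijectively to the simple cycles of the dual graph. The dual of $\G_n$ is a path of $n-1$ square faces all joined to a single outer vertex, so each simple cycle is determined by a contiguous block of faces together with a choice of ``entry'' and ``exit'' edge. Carrying out the bookkeeping, the balanced cuts are precisely: the horizontal cut (top row versus bottom row, with $c=n$ and both sides paths, contributing $n$); for even $n$, the vertical cut at the midline (contributing $2T_{n/2}^2$); and two mirror families of staircase cuts indexed by $i+j=n$ with $i<j$, each side of which is a $2\times i$ grid with a pendant path attached, so that $\tau=T_i$ and $c = n+2-2i$. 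This gives the exact formula
\[
  S_n = n + 2\sum_{i=1}^{\lceil n/2\rceil-1} T_i^{\,2}\,(n+2-2i) + [\,2\mid n\,]\cdot 2T_{n/2}^{\,2}.
\]
I expect this classification to be the main obstacle: one must argue that the bond/dual-cycle correspondence captures \emph{all} connected balanced bipartitions, rule out the many unbalanced cut types (same-side entry/exit, corner cuts) by a size count, and verify that the pendant-path reduction collapses $\tau$ of each irregular half to a clean $T_i$.

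Finally I would extract the limit. Writing $T_i^2$ via $q$ and reindexing the staircase sum from the top (setting $k=\lceil n/2\rceil-1-i$) factors out the dominant power of $q$ and leaves a convergent series $\sum_{k\ge0}(\text{affine in }k)\,q^{-k}$ equal to an explicit rational function of $q$; the subdominant $q^{-i}$ and constant parts of $T_i^2$ contribute nothing in the limit. Dividing by $T_n\sim \alpha^n/(2\sqrt3)$, the powers of $q$ cancel and one is left with a fixed algebraic expression in $q=7+4\sqrt3$, which simplifies (rationalizing against $7-4\sqrt3$) to $\tfrac{3+\sqrt3}{9}$ in the odd case and $\tfrac{1+4\sqrt3}{6\sqrt3}$ in the even case, the parity of $n$ entering only through the top index of the sum and the presence of the vertical term. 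The remaining work is routine algebra once the exact formula above is in hand.
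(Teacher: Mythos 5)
Your proposal is correct and follows essentially the same route as the paper: your sum over balanced connected bipartitions weighted by $\tau(V_1)\,\tau(V_2)\,c(V_1,V_2)$ is exactly the paper's dual-cycle count (your bonds are the duals of its loops $\gamma_i$), your exact formula for $S_n$ reindexes to Lemma~\ref{L:even_odd} verbatim, and both arguments then evaluate the same geometric-type series in $x = r^{-2} = 7-4\sqrt{3}$ to reach the stated limits. The only differences are refinements of presentation: you make explicit the uniqueness of the balanced cut edge (which the paper tacitly assumes in speaking of \emph{the} cut edge of a balanced tree), and you justify the limit/sum interchange by exact reindexing of the dominant terms rather than by the paper's dominated-convergence argument.
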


In Section~\ref{S:conclusion}, we additionally consider the probability that a spanning tree of $\G_n$ is balanced if, instead of choosing $T$ uniformly (that is, choosing it from the UST ``uniform spanning tree'' distribution), it is constructed as a minimal spanning tree with respect to random edge weights (that is, it is drawn from the MST ``minimal spanning tree'' distribution).  Recent theoretical results comparing the UST and MST distributions are found in~\cite{Babson_Duchin_Iseli_Poggi-Corradini_Thurston_Tucker-Foltz_2024} and~\cite{Tapp_2024}.  Surprisingly, for the even choices of $n$ that we computed, the MST probabilities are greater than the UST probabilities.
\section*{Acknowledgments}
This project was supported by the Summer Scholars Program for undergraduate research at Saint Joseph's University.    
\section{Related work}
The literature cited in the previous section motivates and/or addresses general questions about the probability that a random spanning tree is balanced.  In this section, we mention some results in the literature about the structure of $\G_n$ (the $2$-by-$n$ grid).  

The author of~\cite{Raff_2008} derived the following recursive formula for the number of possible spanning trees over a $2$-by-$n$ grid:
\begin{equation} \label{E:Raff}
T_{n+2} = 4\cdot T_{n+1} - T_{n},
\end{equation}
and showed that the generating function of the sequence $T_n$ is
$\frac{x}{1-4x+x^2}$.  His recursive formula is the starting point of our work.

More recently, the authors of~\cite{Blanco_Zeilberger_2025} proved that the limit as $n\rightarrow\infty$ of the portion of the vertices of a uniformly random spanning tree of $\G_n$ which are leaves (degree $1$) equals
$\frac{14\sqrt{5}}{5}-6\approx 0.26$.
\section{Set up}
Let $\G^*_n$ denote the dual graph of $\G_n$.  Let $v_\infty$ denote the vertex of $\G^*_n$ that corresponds to the unbounded face of $\G_n$.  For example, Figure~\ref{F:dual} shows the vertices of $\G_{11}$ in red and the vertices of $\G^*_{11}$ in yellow ($v_\infty$ is not explicitly shown in the figure).

\begin{figure}[bht!]\centering
\includegraphics[width=3in]{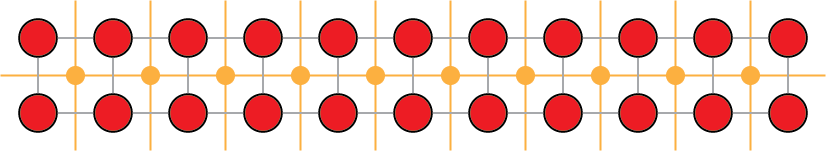}
\caption{$\G_{11}$ (in red) and $\G^*_{11}$ (in yellow).}\label{F:dual}
\end{figure}

Let $\mathcal{T}_n$ denote the set of spanning trees of $\G_n$, so $T_n=|\mathcal{T}_n|$.  By convention, we set $T_0=1$.  It is well known that the spanning trees of $\G_n$ correspond one-to-one with the spanning trees of $\G_n^*$.  For any $T\in\mathcal{T}_n$, let $T^*$ denote the corresponding spanning tree of $\G_n^*$.  With respect to the natural one-to-one correspondence between the edges of $\G_n$ and the edges of $\G^*_n$, the edges of $T^*$ are exactly the ones corresponding to edges of $\G_n$ that do not belong to $T$, as illustrated in Figure~\ref{F:tree_and_dual}.

Removing an edge of $T$ corresponds to adding an edge to $T^*$, which creates a unique cycle $\gamma$ in $T^*$.  Note that all cycles of $\G_n^*$ contain $v_\infty$, so we can regard $\gamma$ as a path from $v_\infty$ to $v_\infty$.  Figure~\ref{F:dual_tree} shows the cycle $\gamma$ induced by removing the highlighted edge of $T$.

\begin{figure}[bht!]\centering
\includegraphics[width=3in]{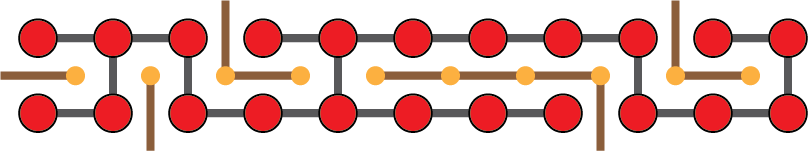}
\caption{A spanning tree $T$ of $\G_{11}$ (grey) and the corresponding spanning tree $T^*$ of $\G^*_{11}$ (brown).}\label{F:tree_and_dual}
\end{figure}

\begin{figure}[bht!]\centering
\includegraphics[width=3in]{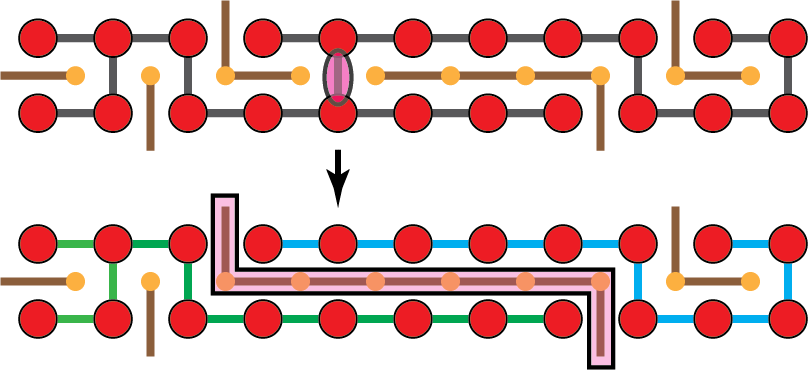}
\caption{Removing the highlighted edge of $T$ adds an edge to $T^*$, which create a loop at $v_\infty$. }\label{F:dual_tree}
\end{figure}

To foreshadow a proof in the next section, we note that the tree $T$ in Figures~\ref{F:tree_and_dual} and~\ref{F:dual_tree} is balanced, and that there is a family of other balanced trees that yield the same cycle $\gamma$.  More precisely, Figure~\ref{F:split_var} indicates that the portion of $T$ in each end block (highlighted yellow) can be replaced by any other spanning tree of $\G_3$, while the cut edge can be placed at any star -- these stars are the edges of $\G_{11}$ that correspond via duality to the edges of $\gamma$.
\begin{figure}[bht!]\centering
\includegraphics[width=3in]{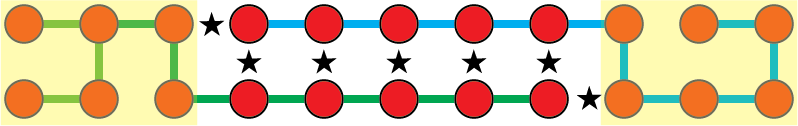}
\caption{Starting from the spanning tree $T$ of Figure~\ref{F:tree_and_dual}, an alternative balanced tree with the same $\gamma$ can be obtained by modifying $T$ in the two yellow end blocks and selecting the balanced cut edge among the locations marked by stars.}\label{F:split_var}
\end{figure}
\section{Counting balanced trees}
Let $\mathcal{S}_n\subset\mathcal{T}_n$ denote the set of balanced spanning trees of $\G_n$, so $S_n=|\mathcal{S}_n|$.  The goal of this section is to derive the following explicit formula for $S_n$.
\begin{lem}\label{L:even_odd} Let $m\geq 1$ be an integer.
\begin{enumerate}
\item \textbf{(Odd case):} If $n=2m+1$, then
$$S_{n} = n + \sum_{i=0}^{m-1} (6+4i)\cdot T^2_{m-i}.$$
\item \textbf{(Even case):} If $n=2m$, then
$$S_{n} = n + 2\cdot  T^2_m + \sum_{i=1}^{m-1} (4+4i)\cdot T^2_{m-i}.$$
\end{enumerate}
\end{lem}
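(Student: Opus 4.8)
The plan is to reduce the enumeration of $\mathcal{S}_n$ to a sum over ``cut shapes,'' where for each shape the count factors as a product of spanning-tree counts of two subgrids times the number of admissible cut edges. The first and most important step is a uniqueness observation: every spanning tree of $\G_n$ has \emph{at most one} balanced cut edge. Indeed, if a tree $T$ had two distinct balanced cut edges $e_1,e_2$, then $T-e_1-e_2$ would have exactly three components $P_1,P_2,P_3$ whose contraction forms a path $P_1-P_2-P_3$ with $e_1=P_1P_2$ and $e_2=P_2P_3$. Balance of $e_1$ forces $\{|P_1|,|P_2|+|P_3|\}=\{n,n\}$ and balance of $e_2$ forces $\{|P_1|+|P_2|,|P_3|\}=\{n,n\}$; subtracting gives $|P_2|=0$, contradicting that $P_2$ is a nonempty component. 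Consequently $S_n=|\mathcal{S}_n|$ equals the number of \emph{pairs} $(T,e)$ in which $e$ is a balanced cut edge of $T$, and this is the quantity I will compute.

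Next I would organize this count by the induced partition. A pair $(T,e)$ determines a partition $\{A,\bar A\}$ of the $2n$ vertices into the two components of $T-e$, each of size $n$ and each connected. Conversely, fixing such a balanced partition, the pairs realizing it are obtained by choosing a spanning tree of the induced subgraph on $A$, a spanning tree of the induced subgraph on $\bar A$, and one edge $e$ of $\G_n$ joining $A$ to $\bar A$; this is exactly the freedom illustrated in Figure~\ref{F:split_var}, where the two cyan end blocks receive arbitrary spanning trees and the cut edge is any purple edge. Since distinct partitions give disjoint sets of pairs, $S_n=\sum_{\{A,\bar A\}} (\#\text{trees of }A)\cdot(\#\text{trees of }\bar A)\cdot(\#\text{edges between }A\text{ and }\bar A)$, the sum ranging over all balanced connected partitions.

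The third step is to enumerate the balanced connected partitions of the ladder $\G_n$ and evaluate each factor. Writing each vertex as a (row, column) pair, the partitions are of two kinds: (i) the single \emph{horizontal} partition into the top and bottom rows; and (ii) the \emph{staircase} partitions, in which the left piece consists of the top-row columns $1,\dots,a$ together with the bottom-row columns $1,\dots,b$ for integers $a,b\ge 1$ with $a+b=n$. I would verify that any partition whose left piece fails to be left-justified in this way disconnects its complement, so that this list is exhaustive. For a staircase partition, each piece is a copy of $\G_{\min(a,b)}$ with a (possibly empty) pendant path attached at a corner; because every pendant edge is forced into each spanning tree, each piece has exactly $T_{\min(a,b)}$ spanning trees, while the number of crossing edges is $2+|a-b|$ (the top edge leaving column $a$, the bottom edge leaving column $b$, and the $|a-b|$ rungs in between). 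For the horizontal partition each row is a path with a unique spanning tree and all $n$ rungs cross, contributing $1\cdot1\cdot n=n$.

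Finally I would assemble and reindex the sum. The horizontal partition contributes $n$. Grouping the staircase partitions by $k=\min(a,b)$ and setting $i=m-k$, the factor $2+|a-b|$ becomes $6+4i$ in the odd case $n=2m+1$ and $4+4i$ in the even case $n=2m$; in the odd case every staircase has $a\ne b$, so each $k\in\{1,\dots,m\}$ arises from two ordered (top--bottom mirror) staircases, whereas in the even case the balanced vertical cut $a=b=m$ arises exactly once and contributes the separate term $2T_m^2$. Collecting terms yields precisely the two formulas of Lemma~\ref{L:even_odd}. I expect the only genuinely delicate points to be the uniqueness argument of the first step, which is what lets us identify the set count $S_n$ with the pair count, and the careful check that the staircase list is exhaustive and that the mirror staircases are counted with the correct multiplicity in the odd versus even cases; the remaining arithmetic is a routine reindexing.
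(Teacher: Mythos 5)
Your proposal is correct, and in substance it performs the same count as the paper, just phrased in the primal graph rather than the dual. The paper classifies balanced trees by the loop $\gamma$ at $v_\infty$ created in the dual tree $T^*$ when the balanced cut edge is removed; you classify by the balanced connected partition $\{A,\bar A\}$. These indexing sets are the same thing: in a planar graph, the minimal edge cuts separating two connected pieces correspond exactly to the simple cycles of the dual, so your $n$ partitions (one horizontal, $n-1$ staircases) are the paper's $n$ loops $\gamma_0,\dots,\gamma_m$ counted with their reflection multiplicities $b_i$, your crossing-edge count $2+|a-b|$ (or $n$ for the horizontal cut) is the paper's $\mathrm{length}(\gamma_i)$, and your two spanning-tree factors for the pieces are the paper's $T_{m-i}^2$ for the end blocks; your reindexing by $i=m-\min(a,b)$ then reproduces both formulas exactly. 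Where you genuinely add something is the uniqueness step: the three-component argument showing a spanning tree has at most one balanced cut edge. The paper speaks of ``the'' balanced cut edge and sums its per-loop counts as though no tree could appear under two different loops; that is true, but only because of precisely the fact you prove, and it is needed, since each per-configuration count is really a count of pairs $(T,e)$ rather than of trees. So your write-up closes a small gap the paper leaves implicit. Conversely, what duality buys the paper is a clean visual enumeration of the $n$ admissible cuts (Figure~\ref{F:channels}); your substitute for that enumeration -- the claim that a piece which is not left-justified disconnects its complement -- is asserted rather than proved, which leaves your argument at the same ``straightforward to check'' level of rigor as the paper's at exactly that one point.
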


\begin{proof} As illustrated in Figure~\ref{F:dual_tree}, removing the balanced cut edge of a tree $T\in \mathcal{S}_n$ results in a unique loop $\gamma$ in $\G^*_n$ at $v_\infty$.  Our strategy is to first count the possibilities for $\gamma$ and then count the possibilities for $T\in \mathcal{S}_n$ associated with each possibility for $\gamma$.  

We claim first that there are exactly $n$ possibilities for $\gamma$ (loops in $\G_n^*$ that divide the vertices of $\G_n$ equally) which can be easily enumerated.  Figure~\ref{F:channels} illustrates in pink the possibilities for $\gamma$ when $m=5$ (including $n=11$ on the left and $n=10$ on the right).  Each ``$\times 2$'' multiplier symbol in the figure indicates that the reflection of the configuration over the horizontal centerline is an additional option for $\gamma$.  It is straightforward to generalize this scheme to arbitrary $m$.

Enumerate the configurations for $\gamma$ (modulo horizontal symmetry) as $\gamma_0,\dots,\gamma_m$, where the meaning of the index $i$ of $\gamma_i$ is as exemplified in the figure. We claim that the $i^\text{th}$ configuration is associated with exactly $$b_i\cdot \text{length}(\gamma_i)\cdot T_{m-i}^2$$ balanced trees in $\G_n$, where $b_i\in\{1,2\}$ is the multiplier; that is, $b_i=2$ if $\gamma$ is different from its horizontal reflection.  Here the factor $\text{length}(\gamma_i)$ counts the number of placements of the balanced cut edge, while the factor $T^2_{m-1}$ counts the number of ways to draw spanning trees in end blocks (which are highlighted yellow in Figures~\ref{F:split_var} and~\ref{F:channels}).  In summary,
$$S_n = \text{length}(\gamma_m)+ \sum_{i=0}^{m-1} b_i\cdot \text{length}(\gamma_i)\cdot T_{m-i}^2.$$
Note that $\text{length}(\gamma_m)=n$, while for $i\in\{0,\dots m-1\}$ we have 
$$\text{length}(\gamma_i) = \begin{cases}
2i+3 & \text{if } n \text{ is odd} \\
2i+2 & \text{if } n \text{ is even.}
\end{cases}$$
Making these substitutions completes the proof.
\end{proof}

\begin{figure}[bht!]\centering
\includegraphics[width=5in]{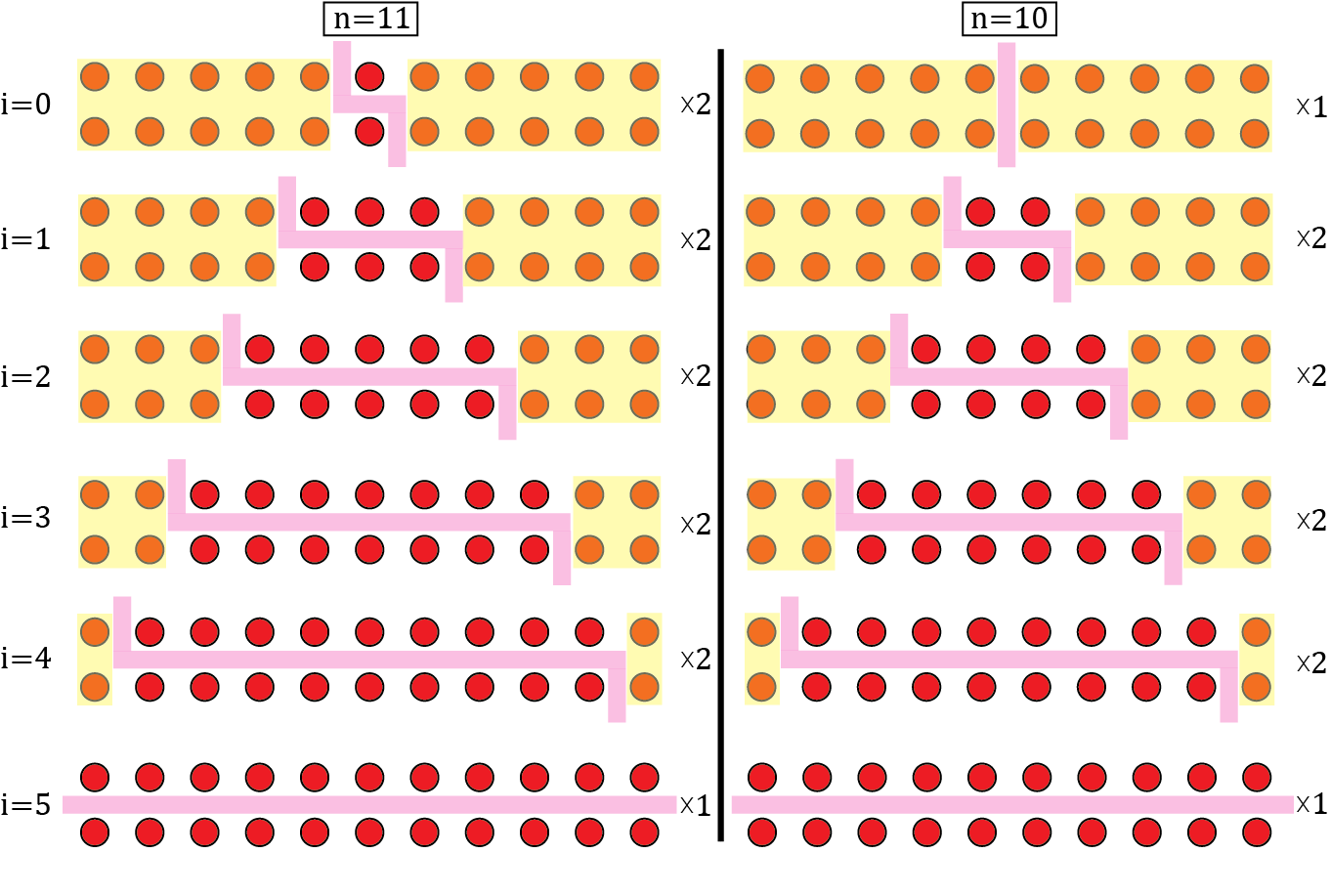}
\caption{The $n$ loops in $\G_{n}^*$ that divide the vertices of $\G_n$ equally are enumerated for $n=11$ (left) and $n=10$ (right).}\label{F:channels}
\end{figure}

\section{Proof of Theorem~\ref{T:main}}
The solution to Equation~\ref{E:Raff} (with initial conditions $T_{1} = 1$, $T_{2} = 4$) is
\begin{equation}\label{E:Raff_sol}
T_k = a\left(r^k - r^{-k} \right),\qquad
\text{ where }a=\frac{1}{2\sqrt{3}}, \text{ and } r = 2+\sqrt{3}.
\end{equation}
In this section, we combine Lemma~\ref{L:even_odd} with Equation~\ref{E:Raff_sol} to prove Theorem~\ref{T:main}.

\begin{proof}[Proof of Theorem~\ref{T:main}]

\textbf{(Odd case):} We first consider the odd case $n=2m+1$.  For each fixed index $i\in\{1,...,m\}$, notice that
\begin{align*}
\frac{T_{m-i}^2}{T_n} 
  & =  a\cdot \frac{\left( r^{m-i}-r^{-(m-i)}\right)^2}{r^{2m+1}-r^{-(2m+1)}} \\
  &  =   a\cdot \frac{r^{2m-2i} + r^{-2m+2i} +2 }{r^{2m+1}-r^{-2m-1}}\\
  & =  a\cdot \frac{1+r^{-4m+4i}+2r^{-2m+2i}}{r^{2i+1}-r^{-4m+2i-1}}\rightarrow \frac{a}{r^{2i+1}}
\end{align*}
where ``$\rightarrow$'' indicates the limit as $m\rightarrow\infty$.  Combining this with Lemma~\ref{L:even_odd} gives:
\begin{align}\label{E:today}
\frac{S_n}{T_n} & = \frac{n}{T_n} + \sum_{i=0}^{m-1} \underbrace{(6 + 4i)\cdot \frac{T_{m-i}^2}{T_{2m+1}}}_{\text{denoted } Y(i,m)}
    \rightarrow \sum_{i=0}^{\infty} (6 + 4i)\cdot \frac{a}{r^{2i+1}}
    =  \frac{a}{r}\sum_{i=0}^{\infty} (6 + 4i)\cdot r^{-2i}.
\end{align}
Setting $x = r^{-2} = 7 - 4\sqrt{3}$, this sum converges to:
\begin{align*}
\sum_{i=0}^\infty (6 + 4i) x^i = \frac{6}{1 - x} + \frac{4x}{(1 - x)^2}
 =  \left(2\sqrt{3} + 3\right) + \left(  \frac{1}{3}\right) =  2\sqrt{3} + \frac{10}{3}.
\end{align*}
Making this substitution and simplifying yields the desired result for the odd case.

It remains to justify the limit claim in Equation~\ref{E:today}, which asserts that 
\begin{equation}\label{E:tomorrow}
\lim_{m\rightarrow\infty}\sum_{i\geq 0} Y(i,m)=\sum_{i \geq 0}\lim_{m\rightarrow\infty}Y(i,m),
\end{equation}
where we can consider all sums to be infinite by setting $Y(i,m)=0$ if $i\geq m$.
To justify Equation~\ref{E:tomorrow} via the Dominated Convergence Theorem, we must construct a sequence $M_i$ such that $\sum_i M_i<\infty$ and $Y(i,m)<M_i$ for all $i\geq 0$ and all $m\geq 1$.  The sequence $M_i=\frac{6+4i}{3^{2i+1}}$ fits the bill because 
\begin{equation}\label{E:nextweek}
\frac{T^2_{m-i}}{T_{2m+1}}\leq \frac{1}{T_{2i+1}} \leq \frac{1}{3^{2i+1}}.
\end{equation}
Equation~\ref{E:nextweek} follows from the more general fact that $T_a\cdot T_b\cdot T_c\leq T_{a+b+c}$, which can be justified algebraically or by observing that there are multiple ways to add edges to a triple in $\mathcal{T}_a\times \mathcal{T}_b\times\mathcal{T}_c$ to generate a tree in $\mathcal{T}_{a+b+c}$.

\textbf{(Even case):} We next consider the even case $n=2m$.  Similar to the previous argument, for each fixed index $i\in\{0,...,m\}$, we have that
\begin{align*}
\frac{T_{m-i}^2}{T_n} 
 \rightarrow \frac{a}{r^{2i}} = a\cdot x^i.
\end{align*}
Combining this with Lemma~\ref{L:even_odd} gives:
\begin{align*}
\frac{S_n}{T_n} & = \frac{n}{T_n} +2\cdot \frac{T_m^2}{T_n} + \sum_{i=1}^{m-1} (4 + 4i)\cdot \frac{T_{m-i}^2}{T^n} \\
    & \rightarrow 2a + a \sum_{i=1}^{\infty} (4 + 4i)\cdot x^i\\
    & = 2a+ a\left( \frac{4x}{1-x} + \frac{4x}{(1-x)^2} \right),
\end{align*}
which simplifies to the desired result.
\end{proof}
\section{Conclusion and future directions}\label{S:conclusion}
We have obtained explicit formulas and limits for the probability that a uniformly random spanning tree of the $2$-by-$n$ grid $\G_n$ is $2$-balanced.  In other words, for this simple class of graphs, we obtained (in the $k=2$ case) exact solutions for the probabilities for which bounds were obtained in~\cite{Cannon_Pegden_Tucker-Foltz_2024}.

Future research could study the probability that a spanning tree of $\G_n$ is $k$-balanced for $k>2$, or could consider the $3$-by-$n$ grid.  It is also important to understand the portion of trees that are \emph{nearly balanced}, which means there is an edge whose removal partitions the vertices into two sets whose sizes differ by at most a fixed tolerance.  

Another future avenue is to study the same question with respect to the MST (minimal spanning tree) distribution on the set of spanning trees of $\G_n$.  In other words, what is the probability that a spanning tree $T$ of $\G_n$ is balanced if, instead of choosing $T$ uniformly (from the ``UST distribution''), it is constructed one edge at a time, with each next edge uniformly randomly selected, along the way rejecting additions that would create cycles?  This is the same as applying Kruskal's algorithm to find a minimum spanning tree with respect to random edge weights.

Table~\ref{T:MST} summarizes for each $n\in\{2,...,19\}$ the portion of trees that are balanced with respect to both the UST and MST distribution.  The UST probabilities in this table are computed as exact fractions via Equation~\ref{E:Raff} and Lemma~\ref{L:even_odd}, and then rounded to $6$ decimals.  The MST probabilities for $n\leq 5$ are computed as exact fractions by enumerating all permutations of the edges of $\G_n$ and tallying the portion of these permutations (considered as edge weights) for which Kruskal's algorithm yields a balanced tree.  The ``$\sim$'' symbol indicates that for $n\geq 6$, the MST probabilities are \emph{approximated} from a sample of one million permutations of the edges of $\G_n$ (because there are too many permutations to loop over all of them).   

It is interesting that, in both the even and odd cases, the UST probabilities converge so quickly to the limit values reported in Theorem~\ref{T:main}.

For odd $n$, the MST values in the table are perhaps unsurprising.  For example, when $n=3$, the exact MST probability of balance ($4/7$) is a bit smaller than the exact UST probability of balance ($6/10$).  The numerical results are consistent with the guess that $MST<UST$ for all odd $n\geq 3$.  It is not clear whether the MST and UST distributions have the same limit in the odd case as $n\rightarrow\infty$.

For even $n$, the MST values in the table are quite surprising.  In the case $n=4$, the exact MST probability of balance ($248/315$) is \emph{larger} than the exact UST probability of balance ($11/14$).  Moreover, given the very large sample size of 1 million permutations, the table provides overwhelming statistical evidence that $MST>UST$ for all $n\in\{6,8,10,12,14,16\}$ as well (with $p$-values smaller than $10^{-100}$).  The table might lead one to guess that the MST limit is strictly larger than the UST limit in the even case as $n\rightarrow\infty$.

This data is surprising because the prevailing expectation was that the MST probability of balance should be $\leq$ the UST probability of balance, at least for natural classes of graphs like grids.

\captionsetup[subtable]{labelformat=empty}
\begin{table}[ht]\label{T:MST}
    \centering
    \begin{subtable}[t]{0.45\textwidth}
        \centering
        \caption{EVEN $n$}
        \begin{tabular}{ccc}
            \toprule
            $n$ & UST & MST \\
            \midrule
            2 & $1$ & $1$ \\
            4 & $\frac{11}{14}\approx 0.785714$ & $\frac{248}{315} \approx 0.787302$ \\
            6 & $0.764103$ & $\sim0.779764$ \\
            8 & $0.762887$ & $\sim0.781753$ \\
            10 & $0.762880$ & $\sim0.783348$ \\
            12 & $0.762890$ & $\sim0.783346$ \\
            14 & $0.762891$ & $\sim0.783693$ \\
            16 & $0.762892$ & $\sim0.783564$ \\
            18 & $0.762892$ & $\sim0.783841$ \\
            \bottomrule
        \end{tabular}
    \end{subtable}
    \hfill
    \begin{subtable}[t]{0.45\textwidth}
        \centering
        \caption{ODD $n$}
        \begin{tabular}{ccc}
            \toprule
            $n$ & UST & MST \\
            \midrule
            3 & $\frac{6}{10}=0.6$ & $\frac{4}{7}\approx 0.571429$ \\
            5 & $\frac{111}{209}\approx 0.531100$ & $\frac{70052}{135135}\approx 0.518385$ \\
            7 & $0.525936$ & $\sim 0.522493$ \\
            9 & $0.525761$ & $\sim0.523989$ \\
            11 & $0.525778$ & $\sim0.524681$ \\
            13 & $0.525783$ & $\sim0.524247$ \\
            15 & $0.525783$ & $\sim0.524980$ \\
            17 & $0.525783$ & $\sim0.524095$ \\
            19 & $0.525783$ & $\sim0.524333$ \\
            \bottomrule
        \end{tabular}
    \end{subtable}
    \caption{The probability that a (UST or MST) random spanning tree of $\G_n$ is balanced.  All values are exact fractions rounded to $6$ digits, except where ``$\sim$'' indicates an approximation from one million random trees.}
\label{T:MST}\end{table}

\bibliographystyle{alpha}  
\bibliography{bibliography} 
\end{document}